\newtheorem{theorem}{Theorem}[section]
\newtheorem{lemma}[theorem]{Lemma}
\newtheorem{question}[theorem]{Question}
\title{{\Large \bf The Sombor index of trees and unicyclic graphs with given matching number
\thanks{Supported by the National Natural Science Foundation of China (No. 11771443).}~}}
\author{Ting Zhou, Zhen Lin\thanks{Corresponding author. E-mail addresses: tb19080009b1@cumt.edu.cn(T. Zhou) lnlinzhen@163.com (Z. Lin), miaolianying@cumt.edu.cn (L. Miao).}, Lianying Miao \\
{\footnotesize School of Mathematics, China University of Mining and Technology,}\\ {\footnotesize  Xuzhou, 221116, Jiangsu, P.R.
China}\\
}
\date{}
\begin{document}
\openup 1.0\jot
\date{}\maketitle
\begin{abstract}
In 2021, the Sombor index was introduced by Gutman, which is a new degree-based topological molecular descriptors. The Sombor index of a graph $G$ is defined as $SO(G) =\sum_{uv\in E(G)}\sqrt{d^2_G(u)+d^2_G(v)}$, where $d_G(v)$ is the degree of the vertex $v$ in $G$. Let $\mathscr{T}_{n,m}$ and $\mathscr{U}_{n,m}$ be the set of trees and unicyclic graphs on $n$ vertices with fixed matching number $m$, respectively. In this paper, the tree and the unicyclic graph with the maximum Sombor index are determined among $\mathscr{T}_{n,m}$ and $\mathscr{U}_{n,m}$, respectively.

\bigskip

\noindent {\bf MSC Classification:} 05C50, 05C09, 05C90

\noindent {\bf Keywords:} Tree; Unicyclic graph; Sombor index; Matching number
\end{abstract}
\baselineskip 20pt

\section{\large Introduction}

Let $G$ be a simple undirected graph with vertex set $V(G)$ and edge set $E(G)$. For $v\in V(G)$, $N_G(v)$ denotes the set of all neighbors of $v$, and $d_G(v)=|N_G(v)|$ denotes the degree of vertex $v$ in $G$. A pendant vertex of $G$ is a vertex of degree $1$. Let $P_n$ and $C_n$ denote the path and the cycle with $n$ vertices, respectively. Let $T_{n,m}$, shown in Fig. 1.1, be the tree obtained by attaching a pendent vertex to $m-1$ non-central vertices of the star $S_{n-m+1}$, and let $U_{n,m}$, shown in Fig. 1.1, be the unicyclic graph obtained by attaching $n-2m+1$ pendent vertices and $m-2$
paths $P_2$ to one vertex of a cycle $C_3$. It is easy to see that $2\leq m\leq \lfloor\frac{n}{2}\rfloor$ and both of $T_{n,m}$ and $U_{n,m}$ contain a matching with $m$ edges. In particular, they have perfect matching for $n=2m$.

\begin{picture}(300,95)
\put(0,60){\circle*{3}}
\put(0,60){\line(1,0){20}}
\put(20,60){\circle*{3}} \put(17,52){\small $v$}
\put(20,60){\line(5,2){20}}
\put(20,60){\line(5,4){20}}
\put(20,60){\line(3,-4){20}}
\put(40,76){\circle*{3}}
\put(40,76){\line(1,0){20}}
\put(40,68){\circle*{3}}
\put(40,68){\line(1,0){20}}
\put(40,45){\vdots}
\put(60,76){\circle*{3}}
\put(60,68){\circle*{3}}
\put(40,34){\circle*{3}}
\put(40,34){\line(1,0){20}}
\put(60,34){\circle*{3}}
\put(45,48){\tiny  $m-1$}
\put(22,9){$T_{2m,\,m}$}

\put(135,60){\circle*{3}} \put(132,51){\small $v$}
\put(135,60){\line(5,2){20}}
\put(135,60){\line(5,4){20}}
\put(135,60){\line(3,-4){20}}
\put(135,60){\line(-3,4){12}}
\put(135,60){\line(-3,-4){12}}
\put(123,76){\circle*{3}}
\put(123,56){\vdots}
\put(123,44){\circle*{3}}
\put(85,56){\tiny $n-2m+1$}
\put(155,76){\circle*{3}}
\put(155,76){\line(1,0){20}}
\put(155,68){\circle*{3}}
\put(155,68){\line(1,0){20}}
\put(155,45){\vdots}
\put(175,76){\circle*{3}}
\put(175,68){\circle*{3}}
\put(155,34){\circle*{3}}
\put(155,34){\line(1,0){20}}
\put(175,34){\circle*{3}}
\put(160,48){\tiny $m-1$}
\put(135,9){$T_{n,\,m}$ }

  \put(230,60){\circle*{3}}
\put(230,60){\line(1,0){20}}
\put(250,60){\circle*{3}}
\put(250,60){\line(5,2){20}}
\put(250,60){\line(5,4){20}}
\put(250,60){\line(3,-4){20}}
\put(250,60){\line(1,-4){6.9}}
\put(250,60){\line(-1,-4){6.9}}
\put(244,34){\circle*{3}}
\put(244,34){\line(1,0){12}}
\put(256,34){\circle*{3}}
\put(270,76){\circle*{3}}
\put(270,76){\line(1,0){20}}
\put(270,68){\circle*{3}}
\put(270,68){\line(1,0){20}}
\put(270,45){\vdots}
\put(290,76){\circle*{3}}
\put(290,68){\circle*{3}}
\put(270,34){\circle*{3}}
\put(270,34){\line(1,0){20}}
\put(290,34){\circle*{3}}
\put(275,48){\tiny  $m-2$}
\put(252,9){$U_{2m,\,m}$}

\put(365,60){\circle*{3}} \put(132,51){\small $v$}
\put(365,60){\line(5,2){20}}
\put(365,60){\line(5,4){20}}
\put(365,60){\line(3,-4){20}}
\put(365,60){\line(-3,4){12}}
\put(365,60){\line(-3,-4){12}}
\put(365,60){\line(1,-4){6.9}}
\put(365,60){\line(-1,-4){6.9}}
\put(359,34){\circle*{3}}
\put(359,34){\line(1,0){12}}
\put(371,34){\circle*{3}}
\put(353,76){\circle*{3}}
\put(353,56){\vdots}
\put(353,44){\circle*{3}}
\put(313,56){\tiny $n-2m+1$}
\put(385,76){\circle*{3}}
\put(385,76){\line(1,0){20}}
\put(385,68){\circle*{3}}
\put(385,68){\line(1,0){20}}
\put(385,45){\vdots}
\put(405,76){\circle*{3}}
\put(405,68){\circle*{3}}
\put(385,34){\circle*{3}}
\put(385,34){\line(1,0){20}}
\put(405,34){\circle*{3}}
\put(390,48){\tiny $m-2$}
\put(365,9){$U_{n,\,m}$ }

\put(125,-15){Fig 1.1 \quad Graphs $T_{n,\,m}$, $T_{n,\,m}$, $U_{2m,\,m}$, $U_{n,\,m}$. }
\end{picture}

\vskip 5mm

The Sombor index of a graph $G$ is defined as $SO(G) =\sum_{uv\in E(G)}\sqrt{d^2_G(u)+d^2_G(v)}$, which is a new vertex-degree-based molecular structure descriptor was proposed by Gutman \cite{G}. The investigation of the Sombor index of graphs has quickly received much attention. Cruz et al. \cite{CGR} studied the Sombor index of chemical graphs, and characterized the graphs extremal with respect to the Sombor index over the following sets: (connected) chemical graphs, chemical trees, and hexagonal systems. Das et al. \cite{DCC} gave lower and upper bounds on the Sombor index of graphs by using some graph parameters. Moreover, they obtained several relations on Sombor index with the first and second Zagreb indices of graphs. Deng et al. \cite{DTW} obtained a sharp upper bound for the Sombor index among all molecular trees with fixed numbers of vertices, and characterized those molecular trees achieving the extremal value. R\'{e}ti et al. \cite{RDA} characterized graphs with the maximum Sombor index in the classes of all connected unicyclic, bicyclic, tricyclic, tetracyclic, and pentacyclic graphs of a fixed order. Lin et al. \cite{LMZ} obtained lower and upper bounds on the spectral radius, energy and Estrada index of the Sombor matrix of graphs, and characterized the respective extremal graphs. Red\v{z}epovi\'{c} \cite{R} showed that the Sombor index has good predictive potential. For other related results, one may refer to \cite{G1, GA, K, KG, WMLF} and the references therein.

The following question is of interest in graph theory and mathematical chemistry.
\begin{question}\label{que1,1} %------
Given a set $\mathcal{G}$ of graphs, find an upper bound for the topological index over
all graphs of $\mathcal{G}$, and characterize the respective extremal graphs.
\end{question}

Inspired by this problem, the topological index of special classes of graphs are well studied in the literature, such as the Zagreb indices of graphs with given clique number \cite{X}, the $ABC$ index of trees with given degree sequence \cite{GLY}, the Randi\'{c} index of trees with given domination number \cite{BNR}, the Estrada index of graphs with given the numbers of cut vertices, connectivity, and edge connectivity \cite{DZX}, etc. Around the Question \ref{que1,1},
we show the following theorem.

\begin{theorem}\label{th1,1} %------
If $T\in\mathscr{T}_{n,m}$,  $2\leq m\leq\lfloor\frac{n}{2}\rfloor$, then
$$SO(T)\leq (n-2m+1)\sqrt{(n-m)^2+1}+(m-1)\sqrt{(n-m)^2+4}+(m-1)\sqrt{5}$$
with equality if and only if $T\cong T_{n,m}$.
\end{theorem}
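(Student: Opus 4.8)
The plan is to single out $T_{n,m}$ as the conjectured extremal tree, verify directly that its Sombor index equals the claimed right-hand side, and then prove optimality by showing that every tree in $\mathscr{T}_{n,m}$ not isomorphic to $T_{n,m}$ admits a matching-number-preserving local modification that strictly raises $SO$. First I would record the degree sequence of $T_{n,m}$: one central vertex $v$ of degree $n-m$, exactly $m-1$ vertices of degree $2$ each carrying a pendant neighbour, $m-1$ such pendant neighbours, and $n-2m+1$ pendant vertices adjacent to $v$. Summing $\sqrt{d^2_G(u)+d^2_G(w)}$ over its $n-1$ edges gives $(n-2m+1)\sqrt{(n-m)^2+1}$ from the pure pendant edges at $v$, $(m-1)\sqrt{(n-m)^2+4}$ from the edges joining $v$ to the degree-$2$ vertices, and $(m-1)\sqrt{5}$ from the cherry edges, which is exactly the stated bound; a direct matching count (each degree-$2$ vertex matched to its pendant, and $v$ matched to one pure pendant) confirms the matching number is $m$. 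Since $\mathscr{T}_{n,m}$ is finite and $SO$ is real-valued, it then suffices to find, for each non-extremal $T$, a transformation inside $\mathscr{T}_{n,m}$ that strictly increases $SO$; iterating must terminate at the unique maximizer $T_{n,m}$.

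The analytic backbone is elementary: the weight function $f(x,y)=\sqrt{x^2+y^2}$ is strictly increasing in each argument, and for fixed $a$ the map $x\mapsto\sqrt{x^2+a^2}$ is strictly convex, so the first difference $\sqrt{(x{+}1)^2+a^2}-\sqrt{x^2+a^2}$ is strictly increasing in $x$. This lets me compare edge contributions whenever a transformation shifts one unit of degree from a lower-degree vertex to a higher-degree vertex: the gain on the edges at the higher-degree endpoint strictly dominates the loss on the edges at the lower-degree endpoint. Two operations suffice. The first is a degree-concentration (branch-sliding) move: fixing a vertex $v$ of maximum degree, I detach a pendant edge or a short branch rooted away from $v$ and reattach it at $v$ whenever a maximum matching of size $m$ survives, using convexity to certify a strict increase in $SO$. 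The second is a path-shortening move: I replace any pendant path of length at least $3$ by a family of pendant edges and cherries ($P_2$'s) hung at $v$, arranged so that the number of independent edges the branch can contribute to a maximum matching is unchanged; because this redistribution strictly concentrates degree at $v$, convexity again forces $SO$ to increase. Repeatedly applying these drives $T$ toward the canonical shape of $T_{n,m}$, and a final count of pure pendants versus cherries (dictated by the constraint that the matching number equal $m$) pins down $T_{n,m}$ exactly.

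The main obstacle is the tension between the two demands: the transformation must raise $SO$ \emph{and} leave the matching number pinned at $m$. Degree concentration in isolation would collapse $T$ toward a star and push the matching number below $m$, so each move must be coupled with a matching-theoretic verification --- most cleanly through a fixed maximum matching together with its alternating/augmenting-path structure, or via K\"onig's theorem on the minimum vertex cover --- that the modified tree still has matching number exactly $m$ and that no larger matching is accidentally created. Checking that the local degree changes yield a \emph{net} strict increase in every case is where the convexity estimate must be applied with care, particularly when the branch being slid was itself attached at a vertex of comparatively large degree (so that both endpoints affected by the move have non-trivial degree); controlling these competing increments is the delicate point. Finally, the extreme regimes --- the perfect-matching case $n=2m$ (where there are no pure pendant filler vertices) and the smallest matching numbers --- do not submit to the generic sliding argument and must be settled by separate direct comparisons serving as the base of the induction.
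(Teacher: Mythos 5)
Your overall strategy (verify $SO(T_{n,m})$ directly, then show every other tree in $\mathscr{T}_{n,m}$ admits a matching-preserving, $SO$-increasing perturbation) differs from the paper, which instead argues by induction on $n$: for $n>2m$ it deletes a pendant vertex left unsaturated by some maximum matching (Lemma \ref{le3,2}), lands in $\mathscr{T}_{n-1,m}$, and bounds the increment using $d_T(v)\le n-m$ and $r\le n-2m+1$; the case $n=2m$ is a separate induction on $m$ via a pendant vertex whose neighbour has degree two. A transformation proof is in principle viable, but as written your proposal leaves its two load-bearing claims unestablished, and one of them is false in the generality you assert. The claim that when a unit of degree is shifted from a lower-degree vertex $w$ to a higher-degree vertex $v$ ``the gain on the edges at the higher-degree endpoint strictly dominates the loss on the edges at the lower-degree endpoint'' does not follow from convexity of $x\mapsto\sqrt{x^2+a^2}$ alone, because the per-edge increment $\sqrt{(x+1)^2+a^2}-\sqrt{x^2+a^2}$ is \emph{decreasing} in $a$. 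Concretely, if $d(v)=d(w)=3$, the other neighbours of $v$ have degree $10$ and those of $w$ are pendants, then each edge at $v$ gains $\sqrt{116}-\sqrt{109}\approx0.33$ while each edge at $w$ loses $\sqrt{10}-\sqrt{5}\approx0.93$, and the net effect of sliding a pendant from $w$ to $v$ is negative even after crediting the moved edge. This is precisely why the paper's Lemma \ref{le2,3} only treats the \emph{complete} transfer of all neighbours along an edge $u_0v_0$, where Karamata's inequality controls the competition; a partial branch-slide needs a genuinely different (and missing) estimate, or else strong structural control on the neighbour degrees that you have not supplied.

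The second gap is that the matching-number constraint is named but not handled: you state that each move ``must be coupled with a matching-theoretic verification,'' yet give no criterion for when a slide or a path-shortening preserves matching number exactly $m$, no proof that such a move always \emph{exists} for a non-extremal $T$ (existence is the whole theorem), and no argument for the perfect-matching case $n=2m$, which you explicitly defer. Since the deferred cases include the base of your implicit induction and the existence claim is the entire content of the optimality assertion, the proposal in its current form is a plan rather than a proof. To repair it along the paper's lines, replace the perturbation scheme by the deletion argument: Lemmas \ref{le3,1} and \ref{le3,2} supply a vertex (or a pendant pair) whose removal stays in the class with the same $m$, and the increment is then bounded edge-by-edge using Lemma \ref{le2,1} together with the combinatorial bounds $d_T(v)\le n-m$ and $r\le n-2m+1$ forced by the matching.
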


\begin{theorem}\label{th1,2} %------
Let $U\in\mathscr{U}_{n,m}$, where $2\leq m\leq\lfloor\frac{n}{2}\rfloor$. Then
$$SO(U)\leq m\sqrt{(n-m+1)^2+4}+(n-2m+1)\sqrt{(n-m+1)^2+1}+\sqrt{5}(m-2)+\sqrt{8}$$
with equality if and only if $U\cong U_{n,m}$.
\end{theorem}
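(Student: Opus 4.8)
The plan is to show that every $U\in\mathscr{U}_{n,m}$ can be transformed into $U_{n,m}$ by a finite sequence of local moves, each of which keeps the matching number equal to $m$ and does not decrease $SO$, with a strict increase unless the graph is already $U_{n,m}$. Writing $f(x,y)=\sqrt{x^2+y^2}$, the two elementary facts I would use throughout are that $f$ is strictly increasing in each argument and that $x\mapsto f(x,c)$ is strictly convex for fixed $c$; convexity is exactly what makes \emph{concentrating} degree at one vertex profitable. After fixing a vertex $v$ of maximum degree, I aim for the target shape in three stages: (i) the unique cycle is a triangle through $v$; (ii) every vertex off the triangle lies within distance $2$ of $v$, attached either as a pendant edge at $v$ or as a pendant $P_2$ at $v$; and (iii) the counts of pendant edges and pendant $P_2$'s are pinned down by $n$ and $m$ to be $n-2m+1$ and $m-2$.

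The cleanest building block is a \emph{pendant-$P_2$ relocation}. If $x,y$ are the two vertices of a pendant $P_2$ (so $y$ is a leaf, $x$ has degree $2$ and is joined to a root $w$), then deleting $wx$ and inserting $vx$ leaves $U-\{x,y\}$ literally unchanged; since some maximum matching may always be taken to contain the edge $xy$, one has $\mu(U)=1+\mu(U-\{x,y\})$, so the matching number does not depend on where the $P_2$ is hung. The effect on $SO$ is that $d(v)$ rises by one while $d(w)$ falls by one, and the convexity of $f(\cdot,c)$ makes the gain on the edges at $v$ outweigh the loss at $w$ when $v$ is the maximum-degree vertex, which lets me pull every pendant $P_2$ onto $v$. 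A companion \emph{path-straightening} move turns a pendant path on three or more vertices into a pendant $P_2$ plus a pendant edge placed at $v$; I would verify that it preserves $\mu$ (again by peeling off an interior matching edge) and raises $SO$ by the same convexity estimate. A third, analogous move slides each remaining pendant edge onto $v$, after which every off-triangle vertex is as described in stage (ii).

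To force a triangle, I take the unique cycle $v_1v_2\cdots v_g$ with $g\ge 4$ (where, after concentration, $v_2,\dots,v_g$ have degree $2$) and apply the shortcut that deletes $v_2v_3$ and inserts $v_1v_3$: this shortens the cycle by one and creates a pendant edge at $v_1$, increasing $d(v_1)$; iterating yields a triangle at $v$, with the convexity estimate again giving a strict increase in $SO$ once the finitely many ways a maximum matching can meet the short cycle are checked. Once $U$ is reduced to a triangle at $v$ carrying only pendant edges and pendant $P_2$'s, its matching number is $2$ (one triangle edge together with one pendant edge at $v$) plus the number of pendant $P_2$'s, so that number equals $m-2$ and the remaining $n-2m+1$ off-triangle vertices are pendant edges at $v$; this is precisely $U_{n,m}$, and substituting the degrees $d(v)=n-m+1$, together with the degree-$2$ and degree-$1$ vertices, reproduces the stated value of $SO(U_{n,m})$. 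The main obstacle is stages (ii)–(iii) under the matching constraint: pure degree concentration would collapse $\mu$ below $m$, so each move must be engineered to hold the matching number fixed, and the genuinely delicate point is confirming that every such matching-preserving move is \emph{strictly} $SO$-increasing — this rests on the convexity comparisons above together with a careful accounting of all edges incident to $v$ and to the vertex that sheds a unit of degree, where the neighbors of the losing vertex are the terms that require the most care.
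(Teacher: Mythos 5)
Your strategy (transform an arbitrary $U$ into $U_{n,m}$ by matching-preserving, $SO$-increasing local moves) is genuinely different from the paper's, which instead inducts on $n$ by deleting an unsaturated pendant vertex (Lemma \ref{le4,2}) and, in the base case $n=2m$, inducts on $m$ by deleting a pendant $P_2$ (Lemma \ref{le4,1}), bounding the resulting degree changes via Lemma \ref{le2,1}. Unfortunately, as described your cycle-shortening move breaks the matching constraint. Take $U=C_6\in\mathscr{U}_{6,3}$, so $\mu=3$. One application of ``delete $v_2v_3$, insert $v_1v_3$'' gives $C_5$ with a pendant at $v_1$, still with $\mu=3$; a second application gives $C_4$ with two pendant vertices at $v_1$, and now every maximum matching has only $2$ edges (any matching edge covering $v_1$ kills both pendants and leaves a $P_3$ on the remaining cycle vertices). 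So iterating the shortcut exits $\mathscr{U}_{n,m}$: to reach $U_{6,3}$ from $C_6$ you must at some point peel off a pendant $P_2$ from the cycle rather than a pendant edge, and your three-stage ordering (concentrate first, then shorten the cycle by creating pendant edges only) cannot produce that. This is a genuine gap, not a checkable detail.

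A second, smaller gap: the claim that the single-edge relocation (moving one pendant $P_2$ or one pendant edge from $w$ to the maximum-degree vertex $v$) increases $SO$ does not follow from convexity of $x\mapsto\sqrt{x^2+c^2}$ alone. The net change is
$\sum_{a\in N(v)}\bigl[\sqrt{(d(v)+1)^2+d(a)^2}-\sqrt{d(v)^2+d(a)^2}\bigr]-\sum_{b\in N(w)\setminus\{x\}}\bigl[\sqrt{d(w)^2+d(b)^2}-\sqrt{(d(w)-1)^2+d(b)^2}\bigr]$
plus the change on the moved edge, and since the increments $\sqrt{(t+1)^2+c^2}-\sqrt{t^2+c^2}$ are \emph{decreasing} in $c$ (Lemma \ref{le2,1}(ii)), a $v$ with high-degree neighbors can gain less per edge than a $w$ with leaf neighbors loses. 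The paper avoids this by only ever using the all-at-once relocation of Lemma \ref{le2,3}, where the Karamata inequality (Lemma \ref{le2,2}) controls the sum, and otherwise never compares two graphs on the same vertex set at all. You flag this as the delicate point, but it is precisely the inequality that would need a proof, and in the stated generality it is not true without further hypotheses on the neighborhoods of $v$ and $w$.
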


\section{\large  Preliminaries}

The distance $d_G(u,v)$ between two vertices $u$, $v$ of $G$ is the length of one of the shortest $(u,v)$-path in $G$.
A matching $M$ of the graph $G$ is a subset of $E(G)$ such that no two edges in $M$ share a common vertex. A matching $M$ of $G$ is said to be maximum, if for any other matching $M'$ of $G$, $|M'|\leq|M|$. The matching number of $G$ is the number of edges in a maximum matching. If $M$ is a matching of $G$ and vertex $v\in V(G)$ is incident with an edge of $M$, then $v$ is $M$-$saturated$, and if every vertex of $G$ is $M$-saturated, then $M$ is a perfect matching.

\begin{lemma}\label{le2,1} %------
Let $a$ and $b$ be integers greater than or equal to one.

{\normalfont (i)} The function $h_1(x)=\sqrt{x^2+a}-\sqrt{(x-1)^2+a}$ is increasing for $x\geq2$.

{\normalfont (ii)} The function $h_2(x)=\sqrt{x^2+b^2}-\sqrt{x^2+(b-1)^2}$ is decreasing for $x\geq2$.
\end{lemma}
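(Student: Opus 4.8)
The plan is to treat both parts as elementary calculus exercises: compute the derivative of each function and determine its sign on the range $x\geq 2$.

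For part (i), I would differentiate to obtain
\[
h_1'(x) = \frac{x}{\sqrt{x^2+a}} - \frac{x-1}{\sqrt{(x-1)^2+a}},
\]
so that proving $h_1$ is increasing amounts to showing this quantity is positive. The clean way to see this is to introduce the auxiliary function $g(t) = t/\sqrt{t^2+a}$ on $[0,\infty)$ and compute $g'(t) = a/(t^2+a)^{3/2} > 0$, using $a\geq 1$. Thus $g$ is strictly increasing, and since $x\geq 2$ forces $x > x-1 \geq 1 > 0$, we get $g(x) > g(x-1)$, that is, $h_1'(x) > 0$. Hence $h_1$ is increasing on $[2,\infty)$.

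For part (ii), differentiation gives
\[
h_2'(x) = x\left(\frac{1}{\sqrt{x^2+b^2}} - \frac{1}{\sqrt{x^2+(b-1)^2}}\right).
\]
Since $b\geq 1$ we have $b^2 - (b-1)^2 = 2b-1 > 0$, so $\sqrt{x^2+b^2} > \sqrt{x^2+(b-1)^2}$, which makes the bracketed term negative; together with $x>0$ this yields $h_2'(x) < 0$. Hence $h_2$ is decreasing (in fact on all of $(0,\infty)$, so \emph{a fortiori} for $x\geq 2$).

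I do not expect any genuine obstacle here. The only point requiring a little care is part (i): one should resist simply rationalizing $h_1$ as $(2x-1)/(\sqrt{x^2+a}+\sqrt{(x-1)^2+a})$, since that leaves both numerator and denominator increasing and is inconclusive. Reducing the sign of $h_1'$ to the monotonicity of the single well-chosen function $g$ sidesteps this difficulty entirely.
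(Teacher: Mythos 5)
Your proposal is correct and follows the same route as the paper, which simply says to compute the derivatives of $h_1$ and $h_2$ and check their signs; you supply the sign analysis (via the auxiliary function $g(t)=t/\sqrt{t^2+a}$ for part (i)) that the paper leaves implicit. Both derivative computations and the resulting sign conclusions check out.
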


\begin{proof}
Calculating the derivative of $h_1(x)$ and $h_2(x)$, we have the proof. $\Box$
\end{proof}

\begin{lemma}{\bf (\cite{KS})}\label{le2,2} %------
Let $U\subseteq \mathbb{R}$ be an open interval and $f:U\rightarrow U$ a convex function. Let $a_1\geq a_2\geq\ldots\geq a_n$ and $b_1\geq b_2\geq\ldots\geq b_n$ be such elements in $U$ that inequalities $a_1+a_2+\ldots+a_n\geq b_1+b_2+\ldots+b_i$ hold for every $i\in\{1,2,\ldots,n\}$ and equality holds for $i=n$. Then, $f(a_1)+f(a_2)+\cdots+f(a_n)\geq f(b_1)+f(b_2)+\cdots+f(b_n)$.
\end{lemma}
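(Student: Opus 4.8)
The statement is the classical Karamata (majorization) inequality, and the plan is to prove it by combining the supporting-line characterization of convexity with Abel summation. First I would fix the partial-sum notation: set $A_k=\sum_{i=1}^k a_i$ and $B_k=\sum_{i=1}^k b_i$, so that the hypotheses read $A_k\ge B_k$ for every $k\in\{1,\dots,n\}$ together with $A_n=B_n$. Writing $d_i=a_i-b_i$ and $D_k=A_k-B_k$, this says precisely that $D_k\ge 0$ for all $k$ and $D_n=0$; these two facts are exactly what the argument will exploit.

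Next I would bring in convexity through supporting lines. Since $f$ is convex on the open interval $U$, at each point $b_i$ it admits a subgradient $c_i$ (for instance the left derivative, which exists everywhere on an open interval), so that the supporting-line inequality $f(x)\ge f(b_i)+c_i(x-b_i)$ holds for all $x\in U$; evaluating at $x=a_i$ gives $f(a_i)-f(b_i)\ge c_i(a_i-b_i)=c_i d_i$. Summing over $i$ yields $\sum_{i=1}^n f(a_i)-\sum_{i=1}^n f(b_i)\ge \sum_{i=1}^n c_i d_i$, so it will suffice to show that the right-hand side is nonnegative. The crucial structural point is that the subgradients can be chosen monotonically: because the subdifferential of a convex function is a monotone set-valued map and the sequence $(b_i)$ is nonincreasing, one can select the $c_i$ so that $c_1\ge c_2\ge\cdots\ge c_n$.

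The final step is an Abel summation. Applying summation by parts to $\sum_{i=1}^n c_i d_i$, using $d_i=D_i-D_{i-1}$ with $D_0=0$, gives
$$\sum_{i=1}^n c_i d_i=\sum_{i=1}^{n-1}(c_i-c_{i+1})D_i+c_n D_n.$$
The boundary term vanishes since $D_n=0$, while every summand $(c_i-c_{i+1})D_i$ is nonnegative because $c_i-c_{i+1}\ge 0$ (monotonicity of the slopes) and $D_i\ge 0$ (the majorization hypothesis). Hence $\sum_{i=1}^n c_i d_i\ge 0$, which chains back to the desired conclusion $\sum_{i=1}^n f(a_i)\ge\sum_{i=1}^n f(b_i)$.

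I expect the main obstacle to be the rigorous justification of the monotone choice of subgradients when $f$ is merely convex rather than differentiable. To handle this cleanly I would invoke the standard facts that a convex function on an open interval has one-sided derivatives everywhere, that $f'_-\le f'_+$ pointwise, and that both one-sided derivatives are nondecreasing; taking $c_i=f'_-(b_i)$ then forces $(c_i)$ to be nonincreasing directly from $b_i\ge b_{i+1}$ and simultaneously guarantees the supporting-line inequality. Once this is secured, the remainder is just the mechanical Abel-summation identity, and an equality analysis (if wanted) follows by tracking when each supporting-line bound is tight and when each product $(c_i-c_{i+1})D_i$ vanishes.
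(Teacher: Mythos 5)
The paper offers no proof of this lemma at all: it is quoted verbatim (modulo a typo) from Karamata's 1932 paper \cite{KS}, so there is no internal argument to compare against. Your proposal is the classical and correct proof of Karamata's majorization inequality: the supporting-line bound $f(a_i)\ge f(b_i)+c_i(a_i-b_i)$ with $c_i=f'_-(b_i)$, the monotonicity $c_1\ge c_2\ge\cdots\ge c_n$ inherited from $b_1\ge b_2\ge\cdots\ge b_n$ via the nondecreasing left derivative, and the Abel summation $\sum_i c_id_i=\sum_{i=1}^{n-1}(c_i-c_{i+1})D_i+c_nD_n\ge 0$ using $D_i\ge 0$ and $D_n=0$ are all correctly assembled, and your fallback to one-sided derivatives handles the non-differentiable case rigorously. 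Two small remarks: the hypothesis as printed in the paper reads $a_1+a_2+\ldots+a_n\ge b_1+b_2+\ldots+b_i$, which is evidently a typo for $a_1+\ldots+a_i\ge b_1+\ldots+b_i$; you silently (and correctly) proved the intended majorization statement, and it would be worth flagging that correction explicitly. Also note that the paper only ever applies the lemma to $f(x)=x^2$ (in Lemma \ref{le2,3}), where a direct two-term computation would suffice, so your fully general argument proves strictly more than the paper needs.
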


\begin{lemma}\label{le2,3} %------
Let $u_0v_0\in E(G)$, $N_G(u_0)\cap N_G(v_0)=\Phi$, and denote by $u_1,u_2,\ldots,u_s$ and $v_1,v_2,\ldots,v_t$ the neighbors of $u_0$ and $v_0$, respectively, where $s,t\geq1$. Let $G'$ be a new graph with vertex set $V(G')=V(G)$ and edge set $E(G')=E(G)-\{v_0v_i|1\leq i\leq t\}+\{u_0v_i|1\leq i\leq t\}$. Then $SO(G')>SO(G)$.
\end{lemma}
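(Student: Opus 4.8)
The plan is to track exactly which vertex degrees change under the stated transformation and then compare the two Sombor indices edge by edge. The hypothesis $N_G(u_0)\cap N_G(v_0)=\Phi$ is what makes the bookkeeping clean: it guarantees that each former neighbor $v_i$ of $v_0$ is a genuinely new neighbor of $u_0$, so no multi-edge is created, $G'$ is again simple, and the only two vertices whose degrees change are $u_0$ and $v_0$. Writing $d_G(u_0)=s+1$ and $d_G(v_0)=t+1$ (the neighbors being $u_1,\dots,u_s,v_0$ and $v_1,\dots,v_t,u_0$), after the move $v_0$ retains only the edge $u_0v_0$, so $d_{G'}(v_0)=1$ and $d_{G'}(u_0)=s+t+1$, while $d_{G'}(w)=d_G(w)$ for every other vertex $w$; in particular $d_{G'}(u_i)=d_G(u_i)$ and $d_{G'}(v_j)=d_G(v_j)$ for all $i,j$.

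Since every edge not incident with $u_0$ or $v_0$ keeps both of its endpoint-degrees, such edges contribute equally to $SO(G)$ and $SO(G')$ and cancel in the difference. I would therefore split the affected edges into three groups: (a) the $s$ edges $u_0u_i$, which survive in $G'$ but with $d(u_0)$ raised from $s+1$ to $s+t+1$; (b) the $t$ edges $v_0v_i$ of $G$, each replaced by the edge $u_0v_i$ of $G'$; and (c) the single central edge $u_0v_0$. This gives
\begin{align*}
SO(G')-SO(G) ={}& \sum_{i=1}^{s}\Big(\sqrt{(s+t+1)^2+d_G(u_i)^2}-\sqrt{(s+1)^2+d_G(u_i)^2}\Big)\\
&+\sum_{j=1}^{t}\Big(\sqrt{(s+t+1)^2+d_G(v_j)^2}-\sqrt{(t+1)^2+d_G(v_j)^2}\Big)\\
&+\Big(\sqrt{(s+t+1)^2+1}-\sqrt{(s+1)^2+(t+1)^2}\Big).
\end{align*}

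It then remains to check that each of the three pieces is positive. Groups (a) and (b) are immediate from the elementary monotonicity of $x\mapsto\sqrt{x^2+c}$ for fixed $c$ (the same flavour as the functions in Lemma \ref{le2,1}): since $s\geq 1$ and $t\geq 1$ we have $s+t+1>s+1$ and $s+t+1>t+1$, so every summand is strictly positive. The delicate term is (c), the central edge, and this is the only real obstacle: here the degree of $u_0$ increases while that of $v_0$ decreases, so no direct monotonicity argument settles the sign. I would resolve it by comparing the radicands, using the identity $(s+t+1)^2+1-\big[(s+1)^2+(t+1)^2\big]=2st>0$, which shows $\sqrt{(s+t+1)^2+1}>\sqrt{(s+1)^2+(t+1)^2}$. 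Adding the three strictly positive contributions yields $SO(G')>SO(G)$, as claimed.
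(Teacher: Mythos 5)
Your proof is correct and follows essentially the same route as the paper: identify that only $u_0$ and $v_0$ change degree, cancel the unaffected edges, note that the $s+t$ surviving/transferred edge terms all increase, and reduce the question to the central edge $u_0v_0$. The only (cosmetic) difference is that you verify $(s+t+1)^2+1>(s+1)^2+(t+1)^2$ by the direct computation of the difference as $2st>0$, whereas the paper deduces the same inequality from Karamata's inequality (Lemma \ref{le2,2}) applied to the convex function $f(x)=x^2$.
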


\begin{proof}
According to the given conditions, we have $d_{G}(u_0)=s+1$, $d_{G'}(u_0)=s+t+1$, $d_{G}(v_0)=t+1$, $d_{G'}(v_0)=1$ and $d_{G}(u_i)=d_{G'}(u_i)$ for $1\leq i\leq s$, $d_{G}(v_j)=d_{G'}(v_j)$ for $1\leq j\leq t$. Since $f(x)=x^2$ is a convex function, by Lemma \ref{le2,2}, $f(s+t+1)+f(1)>f(s+1)+f(t+1)$. Thus we have
\begin{align*}
 & SO(G')-SO(G) \\
= {}& \sum_{uv\in E(G')}\sqrt{d^2_{G'}(u)+d^2_{G'}(v)}-\sum_{uv\in E(G)}\sqrt{d^2_{G}(u)+d^2_{G}(v)}\\
= {} & \sum_{i=1}^s\sqrt{d^2_{G'}(u_0)+d^2_{G'}(u_i)}+\sum_{j=1}^t\sqrt{d^2_{G'}(u_0)+d^2_{G'}(v_j)}+\sqrt{d^2_{G'}(u_0)+d^2_{G'}(v_0)}\\
{} & -\Big[\sum_{i=1}^s\sqrt{d^2_{G}(u_0)+d^2_{G}(u_i)}+\sum_{j=1}^t\sqrt{d^2_{G}(v_0)+d^2_{G}(v_j)}+\sqrt{d^2_{G}(u_0)+d^2_{G}(v_0)}\Big]\\
= {}& \sum_{i=1}^s\!\sqrt{(s\!+\!t\!+\!1)^2+d^2_{G'}(u_i)}+\sum_{j=1}^t\!\sqrt{(s\!+\!t\!+\!1)^2+d^2_{G'}(v_j)}+\!\sqrt{(s\!+\!t\!+\!1)^2+1}\\
{}& -\Big[\sum_{i=1}^s\!\sqrt{(s\!+\!1)^2+d^2_{G}(u_i)}+\sum_{j=1}^t\!\sqrt{(t\!+\!1)^2+d^2_{G}(v_j)}+\!\sqrt{(s\!+\!1)^2+(t\!+\!1)^2}\Big]\\
 > {}& \sqrt{(s+t+1)^2+1}-\sqrt{(s+1)^2+(t+1)^2}\\
 >{} & 0.
\end{align*}
This completes the proof. $\Box$
\end{proof}

\section{\large The proof of Theorem \ref{th1,1}}

\begin{lemma}{\bf (\cite{HL})}\label{le3,1} %------
Let $T\in\mathscr{T}_{2m,m}$, where $m\geq2$. Then there exists a pendant vertex of $T$ whose unique neighbor is of degree two.
\end{lemma}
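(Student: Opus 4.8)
The plan is to produce the required leaf at the end of a longest path of $T$ and to use the perfect matching to pin down its neighbor's degree. First I would observe that, since $T\in\mathscr{T}_{2m,m}$ has $2m$ vertices and matching number $m$, any maximum matching $M$ of $T$ saturates all $2m$ vertices, i.e. $M$ is a \emph{perfect} matching. The one elementary fact I would extract from this is that every pendant vertex is matched along its unique incident edge: a pendant vertex is $M$-saturated and has only one edge available.

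Next I would fix a longest path $P\colon v_0v_1\cdots v_k$ in $T$. Because $m\ge2$, the tree has at least four vertices and (having a perfect matching) is not a star, so its diameter is at least $3$ and $k\ge3$; in particular $v_0$ and $v_k$ are pendant vertices and $v_2$ is not. I would then focus on $v_1$ and record two things. First, by maximality of $P$, every neighbor of $v_1$ other than $v_2$ is a pendant vertex: if such a neighbor $x$ had a further neighbor $y\neq v_1$, then $y\,x\,v_1\,v_2\cdots v_k$ would be a path of length $k+1$, contradicting the choice of $P$. Second, by the remark of the previous paragraph, each of these pendant neighbors is matched to $v_1$ in $M$.

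Combining these is the crux. Since $M$ is a matching, at most one edge of $M$ is incident with $v_1$, so $v_1$ has at most one pendant neighbor; as $v_0$ is already one, there are no others, and hence $N_T(v_1)=\{v_0,v_2\}$, giving $d_T(v_1)=2$. Thus $v_0$ is a pendant vertex whose unique neighbor has degree two, as claimed.

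I expect the only genuine obstacle to be the initial choice of vertex rather than any computation: starting from an arbitrary leaf fails, because its neighbor may have large degree with several pendant children. The point is that the longest-path choice eliminates every nonpendant branch at $v_1$ except the one continuing along $P$, while the perfect matching caps the pendant branches at exactly one; it is the interaction of these two constraints that forces degree two. An alternative by induction on $m$, deleting a suitably chosen matched pendant pair, should also work, but it would require re-verifying the matching number of the smaller tree, so the longest-path argument above is cleaner.
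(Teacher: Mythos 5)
Your proof is correct and complete. There is, however, nothing in the paper to compare it against: the authors state this lemma with a citation to Hou and Li \cite{HL} and give no proof, so your argument supplies a proof that the paper omits. The two ingredients you combine are exactly the right ones, and they are the standard ones for this fact: since $|V(T)|=2m$ and the matching number is $m$, a maximum matching is perfect, so every leaf is matched to its unique neighbour; and the maximality of the longest path $v_0v_1\cdots v_k$ (with $k\ge 3$, since a tree on at least four vertices with a perfect matching cannot be a star) forces every neighbour of $v_1$ other than $v_2$ to be a leaf. As all such leaves would have to be matched to $v_1$ and a matching uses at most one edge at $v_1$, the leaf $v_0$ is the only one, whence $d_T(v_1)=2$. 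The only step you leave implicit is that in the path-extension argument the new vertex $y$ cannot already lie on $P$ --- this follows because $T$ is acyclic --- which is what guarantees that $y\,x\,v_1\,v_2\cdots v_k$ is a genuine path of length $k+1$; it is worth one clause to say so. Your closing diagnosis is also accurate: an arbitrary leaf does not work (its neighbour may carry many pendant children when there is no perfect matching, and even here one needs the longest-path choice to kill the non-pendant branches), and the inductive alternative would indeed carry the extra burden of verifying that the reduced tree still lies in $\mathscr{T}_{2m-2,\,m-1}$, which is precisely the bookkeeping the paper performs later in Lemma \ref{le3,3} where this lemma is applied.
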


\begin{lemma}{\bf (\cite{BG, HL})}\label{le3,2} %------
Let $T\in\mathscr{T}_{n,m}$, where $n>2m$. Then there exists a maximum matching $M$ and a pendant vertex $u$ of $T$ such that $u$ is not $M$-saturated.
\end{lemma}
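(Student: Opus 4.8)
The plan is to start from an arbitrary maximum matching together with an uncovered non-pendant vertex, and then to ``push'' the deficiency along a path of the tree until it lands on a leaf, using Berge's augmenting-path characterization of maximum matchings together with the acyclicity of $T$. First I would fix a maximum matching $M$ of $T$. Since $T$ is a tree with matching number $m$, the matching $M$ saturates exactly $2m$ vertices, so the hypothesis $n>2m$ guarantees that the set of $M$-unsaturated vertices is nonempty; pick one such vertex $w$. If some $M$-unsaturated vertex happens to be pendant, the lemma already holds for this $M$, so I may assume $d_T(w)\ge 2$.

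Next I would construct a maximal $M$-alternating path $P\colon w=z_0,z_1,z_2,\dots$ beginning with a non-matching edge. Starting from $z_0=w$, choose any neighbor $z_1$; the edge $z_0z_1\notin M$ because $w$ is unsaturated, and $z_1$ must be $M$-saturated (otherwise $z_0z_1$ could be added to $M$, contradicting maximality), so $z_1$ is matched to some $z_2$ with $z_1z_2\in M$. Continuing inductively, at an even vertex $z_{2i}$ (reached by a matching edge) I either stop if $z_{2i}$ is pendant, or, if $d_T(z_{2i})\ge 2$, pass along a non-matching edge to a fresh neighbor $z_{2i+1}$. The two crucial observations are: every odd-indexed vertex $z_{2i+1}$ is $M$-saturated, since otherwise $z_0z_1\cdots z_{2i+1}$ would be an $M$-augmenting path, contradicting Berge's theorem and the maximality of $M$; and, because $T$ is acyclic, no vertex can repeat, so $P$ is a genuine path and the matching partner $z_{2i+2}$ of a saturated $z_{2i+1}$ is always a new vertex. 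Since $T$ is finite and $P$ cannot stop at an odd vertex (a saturated vertex always extends to its partner), the construction must terminate, and it can terminate only at an even vertex $z_{2i}$ that is pendant and is reached by the matching edge $z_{2i-1}z_{2i}\in M$.

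Finally, I would take the symmetric difference $M'=M\,\triangle\,E(P)$. Because $P$ contains equally many matching and non-matching edges, $M'$ is again a maximum matching; moreover $M'$ saturates $w$ but removes the matching edge at the pendant vertex $z_{2i}$, so $z_{2i}$ is an $M'$-unsaturated pendant vertex, which proves the lemma with the pair $(M',z_{2i})$. The main obstacle is the careful bookkeeping of the alternating-path construction: one must verify that the only possible stopping condition is arrival at a leaf via a matching edge, which rests on combining Berge's augmenting-path criterion (to force saturation of the odd-indexed vertices) with the absence of cycles in $T$ (to guarantee that the walk is a simple path and that continuation from a non-leaf even vertex is always possible). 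Once this structural claim is in place, the flipping step and the conclusion are routine.
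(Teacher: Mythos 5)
Your proof is correct. Note that the paper does not prove this lemma at all --- it simply imports it from the cited references [BG, HL] --- so there is no in-paper argument to compare against; you have supplied a complete proof where the paper has none. Your alternating-path argument is the standard one and all the delicate points are handled: the odd-indexed vertices $z_{2i+1}$ must be saturated or else $z_0z_1\cdots z_{2i+1}$ is an augmenting path contradicting maximality; acyclicity of $T$ guarantees both that the walk is a simple path and that the matching partner $z_{2i+2}$ is a fresh vertex (it cannot equal $z_{2i}$ since $z_{2i}z_{2i+1}\notin M$ while $z_{2i+1}z_{2i+2}\in M$, and it cannot equal an earlier $z_j$ without creating a cycle); and the only possible terminal configuration is a leaf reached by a matching edge, so that the swap $M'=M\,\triangle\,E(P)$ preserves cardinality and leaves that leaf unsaturated. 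One could shorten the final step by choosing $M$ to minimize the distance from an unsaturated vertex to the nearest leaf and deriving a contradiction, but your explicit construction is equally valid.
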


\begin{lemma}\label{le3,3} %------
Let $m\geq2$ and $T\in \mathscr{T}_{2m,m}$. Then
$$\sqrt{8}(n-3)+2\sqrt{5}\leq SO(T)\leq \sqrt{m^2+1}+(m-1)\sqrt{m^2+4}+\sqrt{5}(m-1).$$
The equality in the left hand side holds if and only if $T\cong P_{2m}$, and the equality in the right hand side holds if and only if $T\cong T_{2m,m}$.
\end{lemma}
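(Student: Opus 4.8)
The two trees in the statement realise the bounds: counting edges gives $SO(T_{2m,m})=\sqrt{m^2+1}+(m-1)\sqrt{m^2+4}+(m-1)\sqrt5$ (one edge from the centre of degree $m$ to its pendant leaf, $m-1$ edges to the degree-two vertices, and $m-1$ pendant edges of the attached $P_2$'s), while $SO(P_{2m})=2\sqrt5+(2m-3)\sqrt8=\sqrt8(n-3)+2\sqrt5$ because $n=2m$. So what remains is the two inequalities together with their equality cases. The plan is to prove the upper bound by induction on $m$, driven by Lemma~\ref{le3,1}, and to obtain the lower bound from the extremal property of the path.

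For the upper bound I would start from the base case $m=2$, where $\mathscr{T}_{4,2}=\{P_4\}=\{T_{4,2}\}$ and the two sides coincide. For $m\ge3$ and $T\in\mathscr{T}_{2m,m}$, Lemma~\ref{le3,1} supplies a pendant vertex $w$ whose neighbour $u$ has degree two; let $x$ be the other neighbour of $u$ and set $d=d_T(x)$. Since $w$ is a leaf, $uw$ lies in every perfect matching, so deleting $u,w$ leaves $T'\in\mathscr{T}_{2(m-1),m-1}$ with $SO(T')\le SO(T_{2(m-1),m-1})$ by induction. Only the edges at $x$ and the path $x,u,w$ change, so writing $z_1,\dots,z_{d-1}$ for the neighbours of $x$ other than $u$,
\[
SO(T)-SO(T')=\sqrt5+\sqrt{d^2+4}+\sum_{i=1}^{d-1}\Big(\sqrt{d^2+d_T^2(z_i)}-\sqrt{(d-1)^2+d_T^2(z_i)}\Big).
\]
Each summand decreases in $d_T(z_i)$ by Lemma~\ref{le2,1}, and the perfect matching allows \emph{at most one} pendant neighbour of $x$ (two leaves at $x$ could not both be saturated). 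Hence the increment is largest when one $z_i$ is a leaf and the other $d-2$ have degree two, which gives $SO(T)-SO(T')\le G(d)$ with
\[
G(d)=\sqrt5+\sqrt{d^2+4}+\big(\sqrt{d^2+1}-\sqrt{(d-1)^2+1}\big)+(d-2)\big(\sqrt{d^2+4}-\sqrt{(d-1)^2+4}\big).
\]
A direct rearrangement identifies $G(d)=SO(T_{2d,d})-SO(T_{2(d-1),d-1})$. Using that a perfect-matching tree has maximum degree at most $m$ (a vertex of degree $d$ forces at least $2d$ vertices), one has $d\le m$, and I would then check via Lemma~\ref{le2,1} that $G$ is increasing, so $G(d)\le G(m)$. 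Summing, $SO(T)\le SO(T_{2(m-1),m-1})+G(m)=SO(T_{2m,m})$; equality forces $SO(T')=SO(T_{2(m-1),m-1})$, a unique pendant neighbour of $x$ with the rest of degree two, and $d=m$, and unwinding the induction these pin down $T\cong T_{2m,m}$.

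For the lower bound, since $P_{2m}\in\mathscr{T}_{2m,m}$, it suffices to recall that $P_{2m}$ is the unique minimiser of $SO$ among \emph{all} trees on $2m$ vertices — the mirror image of Lemma~\ref{le2,3}. I would prove this by de-concentration: if $T$ is not a path it has a vertex of degree $\ge3$, and detaching one branch there and appending it to the end of a pendant path strictly lowers $SO$ (the inverse of the concentration in Lemma~\ref{le2,3}, again verified with Lemma~\ref{le2,1}); iterating pushes every degree down to at most two, i.e.\ to $P_{2m}$, with strictness giving the equality case.

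The real work sits in the inductive step of the upper bound. The first key point is structural: the perfect matching forbids two pendant neighbours of $x$, and this is exactly what caps the increment by the ``$T_{2d,d}$-increment'' $G(d)$ rather than by the much larger all-leaves expression. The second, and the only genuine analytic inequality, is the monotonicity $G(d)\le G(m)$, which combined with $d\le m$ closes the induction; I expect this to be the main obstacle, handled by the convexity/monotonicity encoded in Lemma~\ref{le2,1}. By comparison the lower bound is routine once path-minimality is established, the only care being to keep the de-concentration step matching-preserving and to confirm it strictly decreases $SO$.
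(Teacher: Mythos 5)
Your upper-bound argument is essentially the paper's own proof: induction on $m$, using Lemma~\ref{le3,1} to peel off a pendant $P_2$ at a degree-two vertex, and controlling the increment via $r\le 1$, $d\le m$ and the monotonicity facts of Lemma~\ref{le2,1}, with the same equality analysis. The only divergence is the lower bound, which the paper disposes of by citing the known result of \cite{G} that $P_n$ minimises the Sombor index over all connected graphs of order $n$, rather than re-deriving path-minimality by the de-concentration argument you sketch; both routes work, the citation just being shorter.
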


\begin{proof}Since $P_{2m}\in\mathscr{T}_{2m,m}$, from \cite{G}, the path $P_n$ attains the minimum Sombor index among all connected graphs of order $n$, so $P_{2m}$ is the minimum Sombor index among $\mathscr{T}_{2m,m}$.

Now, we consider the right side inequality. Let $f(m)=\sqrt{m^2+1}+(m-1)\sqrt{m^2+4}+\sqrt{5}(m-1)$. By direct calculation, we have
$SO(T_{2m,m})=f(m)$. Therefore, we next prove $SO(T)\leq SO(T_{2m,m})$ by induction on $m$. If $m=2$, it is east to check that $T=T_{4,2}=P_4$, the result holds.

Suppose that $m\geq3$ and the result holds for trees in $\mathscr{T}_{2m-2,m-1}$. Let $T\in\mathscr{T}_{2m,m}$ with a perfect matching $M$. By Lemma \ref{le2,1}, there exists a pendant vertex $u_0$ in $T$ adjacent to a vertex $u$ of degree two. Then $uu_0\in M$ and $T-\{u, u_0\}\in \mathscr{T}_{2m-2,m-1}$. Let $v$ be the neighbor of $u$ different from $u_0$. Denote by $u, v_1, \ldots,v_{s-1}$ the neighbors of $v$ in $T$, that is $s=d_T(v)$. Let $r$ be the number of pendant neighbors of $v$. Then $r=0$ or $r=1$. Since $T$ has a perfect matching, every pendant vertex is $M$-saturated, we have $d_T(v)\leq m$.
By lemma \ref{le2,1} and the induction hypothesis, we have
\begin{eqnarray*}
SO(T) & = & SO(T-\{u,u_0\})+\sqrt{5}+\sqrt{d^2_T(v)+4}+r\left[\sqrt{d^2_T(v)+1}-\sqrt{(d_T(v)-1)^2+1}\right]\\
&  & +\sum_{i=r}^{s-1}\left[\sqrt{d^2_T(v_i)+d^2_T(v)}-\sqrt{d^2_T(v_i)+(d_T(v)-1)^2}\right]\\
& = & SO(T-\{u,u_0\})+\sqrt{5}+\sqrt{s^2+4}+r\left[\sqrt{s^2+1}-\sqrt{(s-1)^2+1}\right]\\
&  & +\sum_{i=r}^{s-1}\left[\sqrt{d^2_T(v_i)+s^2}-\sqrt{d^2_T(v_i)+(s-1)^2}\right]\\
& \leq & f(m-1)+\sqrt{5}+\sqrt{m^2+4}+\sqrt{m^2+1}-\sqrt{(m-1)^2+1}\\
&  & + (m-2)\left[\sqrt{m^2+4}-\sqrt{(m-1)^2+4}\right]\\
& = & f(m)
\end{eqnarray*}
with equality if and only if $T-\{u, u_0\}\cong T_{2m-2,\,m-1}$, $d_T(v)=s=m$, $r=1$ and $d_T(v_i)=2$ for $2\leq i \leq s-1$, that is to say
$T\cong T_{2m,m}$. This completes the proof. $\Box$
\end{proof}

\noindent{\bf Proof of Theorem \ref{th1,1}} Let $f(n,m)=(n-2m+1)\sqrt{(n-m)^2+1}+(m-1)\sqrt{(n-m)^2+4}$\\$+(m-1)$.
By direct calculation, we have $SO(T_{n,m})=f(n,m)$.
Therefore, we next prove $SO(T)\leq SO(T_{n,m})$ by induction on $n$. If $n=2m$, the result follows from Lemma \ref{le3,3}.

Suppose that $n>2m$ and the result holds for trees in $\mathscr{T}_{n-1,m}$. Let $T\in\mathscr{T}_{n,m}$. By Lemma \ref{le3,2}, there is a maximum matching $M$ and a pendant vertex $u$ of $T$ such that $u$ is not $M$-saturated. Then $T-u\in\mathscr{T}_{n-1,m}$. Let $v$ be the unique neighbor of $u$. Since $M$ is a maximum matching, $M$ contains one edge incident with $v$. Note that there are $n-1-m$ edges of $T$ outside $M$. So $d_T(v)-1\leq n-1-m$, that is $d_T(v)\leq n-m$. Denote by $u, v_1, \ldots,v_{s-1}$ the neighbors of $v$ in $T$, where $s=d_T(v)$. Let $r$ be the number of pendant neighbors of $v$ in $T$, where $1\leq r\leq d_T(v)-1$. Note that at least $r-1$ pendant vertices of $v$ are not $M$-saturated, and there are $n-2m$ vertices are not $M$-saturated in $T$, then $r\leq n-2m+1$. By Lemma \ref{le2,1} and induction hypothesis, we have
\begin{eqnarray*}
\nonumber SO(T) & = & SO(T-u)+\sqrt{d^2_T(v)+1}+(r-1)\left[\sqrt{d^2_T(v)+1}-\sqrt{(d_T(v)-1)^2+1}\right]\\
& & + \sum_{i=r}^{s-1}\left[\sqrt{d^2_T(v_i)+d^2_T(v)}-\sqrt{d^2_T(v_i)+(d_T(v)-1)^2}\right]\\
& = & SO(T-{u})+\sqrt{s^2+1}+(r-1)\left[\sqrt{s^2+1}-\sqrt{(s-1)^2+1}\right]\\
&  & + \sum_{i=r}^{s-1}\left[\sqrt{d^2_T(v_i)+s^2}-\sqrt{d^2_T(v_i)+(s-1)^2}\right]\\
& \leq & f(n-1,m)+\sqrt{(n-m)^2+1}+(n-2m)\left[\sqrt{(n-m)^2+1}-\sqrt{(n-m-1)^2+1}\right]\\
& & + (m-1)\left[\sqrt{(n-m)^2+4}-\sqrt{(n-m-1)^2+4}\right]\\
& = & (n-2m+1)\sqrt{(n-m)^2+1}+(m-1)\sqrt{(n-m)^2+4}+(m-1)\\
& = & f(n,m)
\end{eqnarray*}
with equalities if and only if $G-u\cong T_{n-1,m}$, $d_T(v)=s=n-m$, $r=n-2m+1$ and $d_T(v_i)=2$ for $r\leq i \leq s-1$, that is to say
$T\cong T_{n,m}$. This completes the proof.  $\Box$

\section{\large The proof of Theorem \ref{th1,2}}

For a unicyclic graph $U$, we assume that the cycle $C_k=u_1u_2\ldots u_ku_1$ is the unique cycle of $U$. Let $T_i=T_{u_i}$ be the tree component containing $u_i$ in $U-E(C_k)$.

\begin{lemma}{\bf (\cite{CT})}\label{le4,1} %------
Let $U\in\mathscr{U}_{2m,m}$, where $m\geq3$. If $u\in V(T_i)$ is a pendant vertex that is furthest from the root $u_i$ such that $d_U(u,u_i)\geq2$, then unique neighbor of $u$ is of degree two.
\end{lemma}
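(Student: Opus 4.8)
The plan is to prove a structural statement about unicyclic graphs in $\mathscr{U}_{2m,m}$ that have a perfect matching, namely that a ``deepest'' pendant vertex sits at the end of a pendant path whose internal endpoint has degree exactly two. This is the unicyclic analogue of Lemma \ref{le3,1}, and its role will be to supply the reduction step for an induction on $m$ in the proof of Theorem \ref{th1,2}, exactly as Lemma \ref{le3,1} fed the induction in Lemma \ref{le3,3}. Since this statement is cited from \cite{CT}, I would present a short self-contained argument rather than reproduce the reference.

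First I would fix a perfect matching $M$ of $U$ (which exists because $U\in\mathscr{U}_{2m,m}$ has $2m$ vertices and matching number $m$). Next I would consider the tree component $T_i$ and the pendant vertex $u\in V(T_i)$ that maximizes $d_U(u,u_i)$ subject to $d_U(u,u_i)\ge 2$; call its unique neighbor $w$. The goal is to show $d_U(w)=2$. The key observation is a maximality-of-distance argument: if $d_U(w)\ge 3$, then $w$ has at least two neighbors other than the one on the $w$--$u_i$ path, and I would argue that at least one of these branches must reach another pendant vertex at distance from $u_i$ strictly greater than $d_U(u,u_i)$, or else produce a pendant vertex tying the maximum but forcing a configuration that contradicts how $u$ was chosen. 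In a tree hanging off the cycle, every maximal branch from $w$ (away from $u_i$) terminates in a pendant vertex, and since $w$ is strictly closer to $u_i$ than $u$ is, such a terminal pendant vertex would lie strictly farther from $u_i$, contradicting the extremal choice of $u$.

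Thus $w$ has all but one neighbor being pendant, and in fact the furthest-pendant condition forces every neighbor of $w$ other than its predecessor on the path to $u_i$ to itself be pendant. The remaining work is to rule out $w$ having two or more pendant neighbors, which is where the perfect matching $M$ enters: if $w$ had two pendant neighbors $u$ and $u'$, then $M$ could saturate at most one of them via an edge at $w$, leaving the other pendant vertex $M$-unsaturated, contradicting that $M$ is perfect. Hence $w$ has exactly one pendant neighbor $u$ and exactly one other neighbor (its predecessor), so $d_U(w)=2$, as claimed.

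The main obstacle I anticipate is the careful bookkeeping in the distance-maximality step: one must ensure that the ``furthest pendant'' vertex $u$ really does dominate all branches emanating from $w$, handling the possibility that some branch at $w$ re-enters toward the cycle. This is controlled by the fact that $T_i$ is a \emph{tree} (the component in $U-E(C_k)$), so there are no cycles to create alternative routes back to $u_i$, and the rooted-tree distance from $u_i$ is well defined and strictly increasing along $w\to u$. Once that is pinned down, the perfect-matching argument that finishes off the degree count is routine.
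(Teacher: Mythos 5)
The paper does not actually prove this lemma---it is imported verbatim from \cite{CT} with no argument given---so there is no in-paper proof to compare against. Your self-contained argument is correct and is essentially the standard one: root $T_i$ at $u_i$, note that $w\neq u_i$ (since $d_U(u,u_i)\ge 2$) so $w$ has a unique parent and all its other neighbors are children in $T_i$; by maximality of $d_U(u,u_i)$ every such child must itself be a pendant vertex (otherwise its subtree contains a strictly deeper pendant vertex); and the perfect matching, which must saturate each pendant neighbor of $w$ through an edge at $w$, forbids $w$ from having two pendant children. One small imprecision worth tidying: a branch at $w$ consisting of a single pendant child gives a pendant vertex \emph{tying} $d_U(u,u_i)$, not exceeding it, so the distance-maximality step alone only shows the extra neighbors are pendant---it is the matching step, as you correctly deploy it afterwards, that eliminates them. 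Your observation that distances from $u_i$ computed in $U$ agree with those in $T_i$ (any $u$--$u_i$ path must stay in $T_i$, since leaving $T_i$ requires passing through $u_i$) is the right way to dispose of the ``re-entry through the cycle'' worry.
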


\begin{lemma}{\bf (\cite{YT})}\label{le4,2} %------
Let $U\in\mathscr{U}_{n,m}$, where $n>2m$, and $U\neq C_n$. Then there exists a maximum matching $M$ and a pendant vertex $u$ of $T$ such that $u$ is not $M$-saturated.
\end{lemma}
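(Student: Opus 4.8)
The plan is to reduce to the tree case already packaged in Lemma~\ref{le3,2} and then repair the single exceptional configuration created by the cycle. Let $M$ be a maximum matching of $U$. Since a matching meets the cycle $C_k=u_1u_2\ldots u_ku_1$ in at most $\lfloor k/2\rfloor<k$ edges, some cycle edge $e=u_iu_{i+1}$ lies outside $M$. Then $T:=U-e$ is a spanning tree of $U$: it contains $M$, so its matching number is at least $m$, and being a subgraph of $U$ it is at most $m$, hence exactly $m$, while $n>2m$. By Lemma~\ref{le3,2} there is a maximum matching $M'$ of $T$, hence of $U$, and a leaf $w$ of $T$ that is $M'$-unsaturated. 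If $w$ is a pendant vertex of $U$ we are done, so the only case to treat is $w\in\{u_i,u_{i+1}\}$ with $d_U(w)=2$; say $w=u_i$, whose only $U$-neighbors are $u_{i-1}$ and $u_{i+1}$ and which carries no hanging tree. Since $e\notin M'$ and the remaining $U$-edge at $w$ is not in $M'$ either, $w$ is also $M'$-unsaturated \emph{in} $U$.

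In this case the task is to move the deficiency of $M'$ from the cycle vertex $w$ to a genuine pendant vertex by an alternating swap. The tool is: if there is an $M'$-alternating path $P$ of even length from the unsaturated vertex $w$ to a pendant vertex $p$ (beginning with a non-matching edge and ending with a matching edge), then $M'\triangle E(P)$ is again a matching of size $m$, hence maximum, and it leaves $p$ unsaturated. To find such a $P$ I would run the alternating search from $w$, classifying reachable vertices as \emph{outer} (reached by an even-length alternating path, so matched to the previous vertex, with $w$ itself outer) and \emph{inner} (reached by an odd-length one). Maximality of $M'$ forbids an augmenting path from $w$, so every inner vertex is saturated and its matching edge reaches a fresh outer vertex.

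Now take an outer vertex $o$ of maximum distance from $w$ in the searched region. If that region is acyclic, then $o$ can have no neighbor besides its matching partner: any further neighbor would be joined to $o$ by a non-matching edge, hence be inner and matched to an outer vertex strictly deeper than $o$, contradicting the choice of $o$. Thus $o$ is a pendant vertex of $U$, and the even alternating path $w\leadsto o$ supplies the swap; this is exactly the ``push the deficiency to a leaf'' mechanism underlying Lemma~\ref{le3,2}. The main obstacle is the unique cycle, since the maximum-distance argument can fail only when the searched region contains $C_k$. Here I would invoke the hypothesis $U\neq C_n$: some cycle vertex $u_j$ carries a nontrivial hanging tree $T_j$ containing a pendant vertex of $U$. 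In the alternating search the cycle appears either as an odd alternating cycle (an Edmonds blossom reached from $w$), in which case every vertex of $C_k$ is outer-reachable, or as an even alternating cycle, along which swapping $M'$ flips the inner/outer parity of the cycle vertices; either way $u_j$ may be taken to be outer. Once $u_j$ is outer, its non-matching edge into $T_j$ lets the search enter the tree, where the acyclic argument above, applied inside $T_j$, produces a leaf of $T_j$, i.e.\ a pendant vertex of $U$, together with the even alternating path realizing the swap.

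I expect the delicate point to be precisely this cycle bookkeeping: confirming that a hanging-tree cycle vertex can always be realized as an outer vertex -- through the blossom property or an alternating-cycle swap -- and that concatenating the path to $u_j$ with the path inside $T_j$ yields a genuinely alternating path of even length. Everything away from the cycle is the routine acyclic argument, so the whole difficulty of the lemma is concentrated in showing that the single cycle, which cannot fill all of $U$ because $U\neq C_n$, never traps the deficiency away from the pendant vertices.
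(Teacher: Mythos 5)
The paper offers no proof of this lemma at all---it is imported verbatim from Yu and Tian \cite{YT}---so there is no internal argument to compare yours against; your attempt has to stand on its own. Its first half does: deleting a cycle edge $e\notin M$ to get a spanning tree $T=U-e$ whose matching number is squeezed to exactly $m$ (it contains $M$, and is a subgraph of $U$), invoking Lemma \ref{le3,2}, and observing that the exceptional leaf $w$ of $T$ must be a degree-two cycle vertex of $U$, unsaturated by $M'$, is a clean and correct reduction. The deepest-outer-vertex argument in an acyclic search region is also sound, as is the even-alternating-path swap.

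The genuine gap is exactly where you suspected it, in the cycle bookkeeping, and your proposed dichotomy does not survive scrutiny. Since $w$ lies \emph{on} the unique cycle and is $M'$-unsaturated, both cycle edges at $w$ are non-matching; if the remaining cycle edges alternate perfectly, a parity count forces $k$ odd, i.e., the blossom case. Hence your ``even alternating cycle'' case is vacuous (and the accompanying claim that swapping along it ``flips the inner/outer parity'' so that ``$u_j$ may be taken to be outer'' is asserted, not proved). Meanwhile the case that actually occurs when no blossom forms---the cycle failing to alternate because some cycle vertex is matched \emph{into its hanging tree} (equivalently, two consecutive non-matching cycle edges away from $w$)---is never addressed, and there your conclusion is false as stated: the vertex $u_j$ carrying a hanging tree need not be realizable as outer, and its tree $T_j$ may be entirely unexplored. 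What rescues the lemma in that case is different: run the two search arms from $w$ along the cycle; each inner cycle vertex is saturated (no augmenting path) and matched either forward along the cycle or into its hanging tree. If every explored inner vertex is matched forward, the arms collide front-to-front and parity forces an outer--outer collision, i.e., a blossom with $k$ odd; then all cycle vertices are outer, and $U\neq C_n$ guarantees some cycle vertex has a hanging tree to enter. Otherwise some inner cycle vertex is matched into its hanging tree, the search enters \emph{that} tree (not necessarily $T_j$), and your acyclic deepest-outer argument applies inside it to produce the outer pendant vertex. With the dichotomy corrected to ``blossom or tree-entry,'' your plan closes; as written, the case analysis is simultaneously non-exhaustive and contains an impossible case, with the load-bearing step left unjustified.
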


\begin{lemma}\label{le4,3} %------
Let $U\in \mathscr{U}_{2m,m}$, where $m\geq2$. Then
$$2\sqrt{8}m\leq SO(U)\leq m\sqrt{(m+1)^2+4}+\sqrt{(m+1)^2+1}+\sqrt{5}(m-2)+\sqrt{8}.$$
The equality in the left holds if and only if $U\cong C_{2m}$, and the equality in the right holds if and only if $U\cong U_{2m,m}$.
\end{lemma}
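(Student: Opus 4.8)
The plan is to prove the two bounds separately, mirroring the structure of Lemma \ref{le3,3}. I would first record the extremal values by direct computation: since $C_{2m}$ is $2$-regular with $2m$ edges, every edge contributes $\sqrt{2^2+2^2}=\sqrt8$, so $SO(C_{2m})=2\sqrt8\,m$; and a term-by-term count over the edges of $U_{2m,m}$ (the two cycle edges at the centre $v$, the one remaining cycle edge, the single pendant edge at $v$, and the $m-2$ attached paths $P_2$) gives $SO(U_{2m,m})=g(m)$, where $g(m)=m\sqrt{(m+1)^2+4}+\sqrt{(m+1)^2+1}+\sqrt5(m-2)+\sqrt8$ is exactly the claimed right-hand side. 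For the left inequality I would use the fact that the cycle $C_n$ is the unique minimiser of the Sombor index among all unicyclic graphs of order $n$ (any other unicyclic graph has a vertex of degree at least $3$, and a majorisation argument via the convexity tool of Lemma \ref{le2,2} strictly increases $SO$). Since $C_{2m}\in\mathscr{U}_{2m,m}$, this gives $SO(U)\ge 2\sqrt8\,m$ with equality iff $U\cong C_{2m}$.

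The bulk of the work is the right inequality, which I would prove by induction on $m$, parallel to Lemma \ref{le3,3}. For the base case $m=2$ the class $\mathscr{U}_{4,2}$ consists only of $C_4$ and $U_{4,2}$, and one checks $SO(C_4)=4\sqrt8<g(2)=2\sqrt{13}+\sqrt{10}+\sqrt8$ directly. For the inductive step, assume the bound on $\mathscr{U}_{2m-2,m-1}$ and take $U\in\mathscr{U}_{2m,m}$ with perfect matching $M$ and $m\ge3$. The main reduction rests on Lemma \ref{le4,1}: if $U$ has a pendant vertex at distance at least $2$ from the cycle, then the furthest such pendant $u_0$ has a neighbour $u$ of degree $2$; since $u_0$ is a leaf it is saturated by $M$ through its only edge, so $u_0u\in M$ and $U-\{u,u_0\}\in\mathscr{U}_{2m-2,m-1}$. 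Writing $v$ for the other neighbour of $u$ and $s=d_U(v)$, I would expand $SO(U)=SO(U-\{u,u_0\})+\sqrt5+\sqrt{s^2+4}+\sum_{i}\big[\sqrt{d_U(v_i)^2+s^2}-\sqrt{d_U(v_i)^2+(s-1)^2}\big]$ and bound each piece. Two observations carry the estimate: because $v$ is saturated exactly once by $M$ while $U$ has only $m$ non-matching edges, one gets $s=d_U(v)\le m+1$; and because a leaf adjacent to $v$ can only be matched to $v$, the perfect matching forces $v$ to have at most one pendant neighbour, i.e. $r\le1$. Feeding $s\le m+1$ and $r\le1$ into the monotonicity statements of Lemma \ref{le2,1} (part (i) to push $s$ up to $m+1$, part (ii) to replace each non-pendant neighbour degree by its extreme value $2$) bounds the increment by $g(m)-g(m-1)$, and the hypothesis $SO(U-\{u,u_0\})\le g(m-1)$ closes the induction. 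Tracking when every inequality is tight ($s=m+1$, $r=1$, every remaining neighbour of $v$ of degree $2$, and $U-\{u,u_0\}\cong U_{2m-2,m-1}$) then yields equality iff $U\cong U_{2m,m}$.

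The remaining, and I expect most delicate, point is that Lemma \ref{le4,1} applies only when a pendant at distance at least $2$ from the cycle exists, so the induction does not directly reach the graphs with no such pendant, namely $C_{2m}$ itself and those that are a cycle with pendants attached \emph{directly} to cycle vertices. For these I would argue by a separate direct bound rather than by reduction. In any such $U$ the perfect matching forces each cycle vertex to carry at most one pendant (two leaves on one cycle vertex could not both be saturated), so every vertex has degree at most $3$; hence each of the $2m$ edges contributes at most $\sqrt{18}=3\sqrt2$, and since the $p\ge1$ pendant edges only contribute $\sqrt{10}$ one gets $SO(U)<6\sqrt2\,m$. Comparing this linear-in-$m$ quantity with $g(m)>m(m+1)$ settles all $m\ge8$ at once, while the finitely many values $3\le m\le7$ can be disposed of by the same edge-count sharpened with the fact that degree-$3$ cycle vertices cannot be densely packed. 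This ad hoc treatment of the shallow and pure-cycle configurations — showing they fall strictly below $g(m)$ and so never tie the maximum — is the main obstacle, since it lies outside the clean inductive reduction that handles the generic case.
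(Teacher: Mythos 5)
Your overall architecture matches the paper's: the lower bound by quoting that $C_n$ minimizes $SO$ among unicyclic graphs, the base case $m=2$, and the inductive reduction via Lemma \ref{le4,1} (delete a deep pendant $u_0$ and its degree-two neighbour $u$, use $d_U(v)\le m+1$ and $r\le 1$, apply Lemma \ref{le2,1} and the induction hypothesis) are all exactly what the paper does. The place where you diverge is the case you yourself flag as the main obstacle — graphs in which every pendant vertex sits at distance $1$ from the cycle — and there your argument has a genuine gap.

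Your proposed direct bound for that case does not close. Counting $2m$ edges at $\sqrt{18}$ each gives $SO(U)\le 6\sqrt2\,m$, and comparing with $g(m)>m(m+1)$ only settles $m\ge 8$; using the exact value of $g(m)$ the crude bound still fails for $m=3$ and $m=4$ (e.g.\ $6\sqrt2\cdot 4\approx 33.941>g(4)\approx 33.940$). Your suggested repair, that ``degree-$3$ cycle vertices cannot be densely packed,'' is not available: the near-extremal configuration in this case is precisely $C_m$ with one pendant on \emph{every} cycle vertex, so all cycle vertices have degree $3$. Moreover the perfect-matching constraint permits configurations with few pendants (e.g.\ $C_{2m-1}$ with a single pendant), for which the refinement ``$p$ pendant edges contribute only $\sqrt{10}$'' is too weak as well. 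The paper avoids this entirely by a different idea: it takes $U$ to be a \emph{maximizer} of $SO$ over $\mathscr{U}_{2m,m}$ and shows that if the cycle length $k$ exceeds $m$, then some cycle edge lies in $M$ and the edge-shift of Lemma \ref{le2,3} produces a graph in the same class with strictly larger Sombor index — a contradiction. This forces $k=m$, i.e.\ the unique candidate $C_m$ with one pendant per vertex, whose index $(\sqrt{18}+\sqrt{10})m$ is then checked directly to be less than $g(m)$. Without this extremality/exchange step (or a genuinely complete case analysis of all shallow configurations for every $m\ge3$), your proof of the upper bound is incomplete.
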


\begin{proof} Since $C_{2m}\in\mathscr{U}_{2m,m}$, from \cite{RDA}, the cycle $C_n$ attains the minimum Sombor index among all unicyclic graphs of order $n$, so $C_{2m}$ is the minimum Sombor index among $\mathscr{U}_{2m,m}$.

Now, we consider the right side inequality. Let $g(m)=m\sqrt{(m+1)^2+4}+\sqrt{(m+1)^2+1}$\\$+\sqrt{5}(m-2)+\sqrt{8}$. By direct calculation, we have
$SO(U_{2m,m})=g(m)$. Therefore, we next prove $SO(U)< SO(U_{2m,m})$ for any $U\in\mathscr{U}_{2m,m}\backslash \{U_{2m,m}\}$. If $m=2$, it is easy to check that $U\in\{U_{4,2},C_4\}$, $SO(C_4)<SO(U_{4,2})$, the result holds.

Next, suppose that $m\geq3$ and $U$ has the maximum Sombor index among $\mathscr{U}_{2m,m}$. Let $C_k=u_1u_2\ldots u_ku_1$ be the unique cycle of $U$, we consider the following two cases:

{\bf Case 1.}  For any pendant vertex $u$ of $U$, if $u\in V(T_i)$, then $d_U(u,u_i)=1$. In this case, $U$ is a graph obtained from $C_k$ by attaching some pendant vertices to $u_i$ for $1\leq i\leq k$, where $m\leq k<2m$. Since every pendant vertex in $U$ is $M$-saturated, we have $d_U(u_i)\leq3$ for $1\leq i\leq k$. If $m<k$, then there exists at least one edge $u_iu_{i+1(mod\ k)}$ of $C_k$ such that $u_iu_{i+1(mod\ k)}\in M$. Let $U'=U-u_iu_{i+1(mod\ k)}+u_iu_{i+2(mod\ k)}$, by Lemma \ref{le2,3}, $SO(U)<SO(U')$. This contradicts to the choice of $U$. Therefore, $m=k$, that is to say, $U$ is the graph attaching a pendant vertex to each vertex of $C_m$. So $SO(U)=\sum_{uv\in E(U)}\sqrt{d^2_U(u)+d^2_U(v)}=(\sqrt{18}+\sqrt{10})m<g(m)$.

{\bf Case 2.} There exists a pendant vertex $u$ of $U$, $u\in V(T_i)$ such that $d_U(u,u_i)\geq2$. In this case, let $v$ be the unique neighbor of $u$, by Lemma \ref{le4,1}, $d_U(v)=2$. We prove our results by induction on $m$. Suppose that $m\geq3$ and the result holds for unicyclic graphs in $\mathscr{U}_{2m-2,m-1}$. Since $uv\in M$ and $U-\{u,v\}\in \mathscr{U}_{2m-2,m-1}$. Let $w$ be the neighbor of $v$ different from $u$. Denote by $v, w_1, \ldots,w_{s-1}$ the neighbors of $w$ in $U$, where $s=d_T(w)$. Let $r$ be the number of pendant neighbors of $w$. Then $r=0$ or $r=1$. Since $U$ has a perfect matching, every pendant vertex is $M$-saturated, we have $d_T(w)=s\leq m+1$. By lemma \ref{le2,1} and the induction hypothesis, we have
\begin{eqnarray*}
\nonumber SO(U) & = & SO(U-\{u,v\})+\sqrt{5}+\sqrt{d^2_U(w)+4}+r\Big[\sqrt{d^2_U(w)+1}-\sqrt{(d_U(w)-1)^2+1}\Big]\\
& & +\sum_{i=r+1}^{s-1}\Big[\sqrt{d^2_U(w_i)+d^2_U(w)}-\sqrt{d^2_U(w_i)+(d_U(w)-1)^2}\Big]\\
& = & SO(U-\{u,v\})+\sqrt{5}+\sqrt{s^2+4}+r\Big[\sqrt{s^2+1}-\sqrt{(s-1)^2+1}\Big]\\
& & +\sum_{i=r+1}^{s-1}\Big[\sqrt{d^2_U(w_i)+s^2}-\sqrt{d^2_U(w_i)+(s-1)^2}\Big]\\
& \leq & g(m-1)+\sqrt{5}+\sqrt{(m+1)^2+4}+\sqrt{(m+1)^2+1}-\sqrt{m^2+1}\\
& & +(m-1)\Big[\sqrt{(m+1)^2+4}-\sqrt{m^2+4}\Big]\\
& = & g(m)
\end{eqnarray*}
with equalities if and only if $U-\{u,v\}\cong U_{2m-2,m-1}$, $r=1$, $d_U(w_1)=1$, $d_U(w_i)=2$ for $2\leq i\leq s-1$ and $d_U(w)=s=m+1$, that is to say
$U\cong U_{2m,m}$. This completes the proof. $\Box$
\end{proof}

\noindent{\bf Proof of Theorem \ref{th1,2}} Let $g(n,m)=m\sqrt{(n-m+1)^2+4}+(n-2m+1)\sqrt{(n-m+1)^2+1}$\\$+\sqrt{5}(m-2)+\sqrt{8}$.
By direct calculation, we have $SO(U_{n,m})=g(n,m)$. Therefore, we next prove $SO(U)\leq SO(U_{n,m})$ by induction on $n$. If $n=2m$, the result follows from Lemma \ref{le3,3}.

Suppose that $n>2m$ and the result holds for unicyclic graphs in $\mathscr{U}_{n-1,m}$. Let $U\in\mathscr{U}_{n,m}$. By Lemma \ref{le4,2}, there is a maximum matching $M$ and a pendant vertex $u$ of $U$ such that $u$ is not $M$-saturated. Then $U-u\in\mathscr{U}_{n-1,m}$. Let $v$ be the unique neighbor of $u$, $d_U(v)\leq n-m+1$. Denote by $u, v_1, \ldots,v_{s-1}$ the neighbors of $v$ in $U$, where $s=d_U(v)$. Let $r$ be the number of pendant neighbors of $v$ in $U$, where $1\leq r\leq d_U(v)-1$. Note that at least $r-1$ pendant vertices of $v$ are not $M$-saturated, and there are $n-2m$ vertices are not $M$-saturated in $U$, then $r\leq n-2m+1$. By Lemma \ref{le2,1} and induction hypothesis, we have
\begin{eqnarray*}
SO(U) & = & SO(U-u)+(r-1)\Big[\sqrt{d^2_U(v)+1}-\sqrt{(d_U(v)-1)^2+1}\Big]\\
& & +\sum_{i=r}^{s-1}\Big[\sqrt{d^2_U(v_i)+d^2_U(v)}-\sqrt{d^2_U(v_i)+(d_U(v)-1)^2}\Big]+\sqrt{d^2_U(v)+1}\\
& = & SO(U-{u})+(r-1)\Big[\sqrt{s^2+1}-\sqrt{(s-1)^2+1}\Big]\\
& & +\sum_{i=r}^{s-1}\Big[\sqrt{d^2_U(v_i)+s^2}-\sqrt{d^2_U(v_i)+(s-1)^2}\Big]+\sqrt{s^2+1}\\
& \leq & g(n-1,m)+(n-2m)\Big[\sqrt{(n-m+1)^2+1}-\sqrt{(n-m)^2+1}\Big]\\
& & +m\Big[\sqrt{(n-m+1)^2+4}-\sqrt{(n-m)^2+4}\Big]+\sqrt{(n-m+1)^2+1}\\
& = & g(n,m)
\end{eqnarray*}
with equalities if and only if $U-u\cong U_{n-1,m}$, $r=n-2m+1$, $d_U(v_i)=2$ for $r\leq i\leq s-1$ and $d_U(v)=s=n-m+1$, that is to say
$U\cong U_{n,m}$. This completes the proof. $\Box$

\vskip 5mm

\small {

}

\end{document}